\newtheorem{theorem}{Theorem}[section]
\newtheorem{proposition}[theorem]{Proposition}
\newtheorem{lemma}[theorem]{Lemma}
\theoremstyle{definition}
\newtheorem{definition}[theorem]{Definition}
\newtheorem{bigremark}[theorem]{Remark}
\newtheorem{example}[theorem]{Example}
\newcounter{bean}
\newcommand{\seqm}[3]{\ensuremath{#1\stackrel{#2}
 {\longrightarrow}#3}}
\newcommand{\seqmm}[5]{\ensuremath{#1\stackrel{#2}
 {\longrightarrow}#3\stackrel{#4}{\longrightarrow}#5}}
\newcommand{\seqmmm}[7]{\ensuremath{#1\stackrel{#2}
 {\longrightarrow}#3\stackrel{#4}{\longrightarrow}#5
  \stackrel{#6}{\longrightarrow}#7}}
\newcommand{\floor}[1]{\ensuremath{\left\lfloor #1 \right\rfloor}}
\newcommand{\paren}[1]{\ensuremath{\left( #1 \right)}}
\newcommand{\br}[1]{\ensuremath{\left\{ #1 \right\}}}
\newcommand{\sbr}[1]{\ensuremath{\left[#1\right]}}
\newcommand{\cvee}[3]{\displaystyle\bigvee^{#2}_{#1}#3}
\newcommand{\cplus}[3]{\displaystyle\bigoplus^{#2}_{#1}#3}
\newcommand{\cunion}[3]{\displaystyle\bigcup^{#2}_{#1}#3}
\newcommand{\cunionmulti}[4]{\displaystyle\bigcup^{#3}
_{\renewcommand{\arraystretch}{0.6}\begin{matrix}\scriptstyle #1 \cr \scriptstyle #2\end{matrix}\renewcommand{\arraystretch}{1.2}}#4}
\newcommand{\cset}[2]{\br{#1\,\,\middle\vert\,\,#2}}
\newcommand{\qqed}{\hfill\square}
\renewcommand{\k}{\mathbf k}
\newcommand{\mc}[1]{\ensuremath{\mathcal{#1}}}
\newcommand{\mb}[1]{\ensuremath{\mathbb{#1}}}
\newcommand{\mr}[1]{\ensuremath{\mathring{#1}}}
\newcommand{\ID}{\ensuremath{\mathbbm{1}}}
\newcommand{\bd}{\ensuremath{\partial}}
\newcommand{\wcolon}{\ensuremath{\,\colon\,}}
\DeclareMathOperator{\Int}{int}
\DeclareMathOperator{\cl}{cl}
\newcommand{\z}[1]{\mathcal Z_{#1}}
\newcommand{\zk}{\mathcal Z_K}
\newcommand{\hz}[1]{\widehat{\mathcal Z}_{#1}}
\newcommand{\hzk}{\widehat{\mathcal Z}_K}
\newcommand{\rz}[1]{\mathbb R\mathcal Z_{#1}}
\newcommand{\rzk}{\mathbb R\mathcal Z_K}
\newcommand{\hrz}[1]{\mathbb R\widehat{\mathcal Z}_{#1}}
\newcommand{\hrzk}{\mathbb R\widehat{\mathcal Z}_K}
\begin{document}
\title{$\frac{n}{3}$-Neighbourly Moment-Angle Complexes and their Unstable Splittings}

\author{Piotr Beben}
\address{\scriptsize{School of Mathematics, University of Southampton,Southampton SO17 1BJ, United Kingdom}} 
\email{P.D.Beben@soton.ac.uk} 
\author{Jelena Grbi\'c} 
\address{\scriptsize{School of Mathematics, University of Southampton,Southampton SO17 1BJ, United Kingdom}}  
\email{J.Grbic@soton.ac.uk} 

\subjclass[2010]{Primary 55P15, 55U10, 13F55}
\keywords{polyhedral product, moment-angle complex, toric topology, Stanley-Reisner ring, Golod ring, neighbourly simplicial complexes} 

\begin{abstract}
Given an $\frac{n}{3}$-neighbourly simplicial complex $K$ on vertex set $[n]$,
we show that the moment-angle complex $\zk$ is a $co$-$H$-space if and only if $K$ satisfies a homotopy analogue of the Golod property.
\end{abstract}
\maketitle

\section{Introduction}

A ring $R=\k[v_1,\ldots,v_n]/I$ for $I$ a homogeneous ideal is said to be \emph{Golod} 
if the multiplication and higher Massey products are trivial in $\mathrm{Tor}^+_{\k[v_1,\ldots,v_n]}(R,\k)$.
The Poincar\'e series of the ring $\mathrm{Tor}_{R}(\k,\k)$ of $R$ represents a rational function whenever $R$ is Golod~\cite{MR0138667}. 
In combinatorics, one says a simplicial complex $K$ on vertex set $[n]=\{1,\ldots,n\}$ is \emph{Golod} over $\k$ 
if its Stanley-Reisner ring $\k[K]$ is Golod. 
Equivalently, when $\k$ to be a field or $\mb Z$, $K$ is Golod over $\k$ if all cup products and Massey products 
(of positive degree elements) are trivial in the cohomology ring $H^*(\zk;\k)$ of the \emph{moment-angle complex} $\zk$.
This follows from isomorphisms of graded commutative algebras given in~\cite{MR1897064,MR2255969,MR2117435,MR0441987}
\begin{equation}
\label{EHochster}
H^*(\zk;\k)\cong \mathrm{Tor}_{\k[v_1,\ldots,v_n]}(\k[K],\k) \cong \cplus{I\subseteq[n]}{}{\tilde H^*(\Sigma^{|I|+1}|K_I|;\k)}
\end{equation}
where $K_I$ is the restriction of $K$ to vertex set $I\subseteq[n]$. 
The multiplication on the right hand side is induced by maps
$\iota_{I,J}\wcolon\seqm{|K_{I\cup J}|}{}{|K_I\ast K_J|\cong |K_I|\ast |K_J|\simeq\Sigma |K_I|\wedge |K_J|}$
that realize the canonical inclusions \seqm{K_{I\cup J}}{}{K_I\ast K_J} whenever $I$ and $J$ are non-empty and disjoint, 
and are defined zero otherwise.  
Thus, the Golod condition is equivalent to $\iota_{I,J}$ inducing trivial maps on $\k$-cohomology for all disjoint non-empty $I,J\subsetneq[n]$,
as well as Massey products vanishing.
 
Since the cohomology ring $H^*(\zk)$ is in its simplest algebraic form when $K$ is Golod, 
it is natural to ask what the homotopy type of $\zk$ is in this case. 
Quite a lot of work has been done in this direction 
~\cite{MR2138475,MR2321037,MR3084441,arXiv:1306.6221,arXiv:1211.0873,BebenGrbic1,IK1},
and it has been conjectured that $K$ is Golod if and only if $\zk$ is a co-$H$-space.
One can then determine the homotopy type of $\zk$ since, by~\cite{arXiv:1306.6221,MR2673742}, 
$\zk$ is a $co$-$H$-space if and only if $\zk\simeq\bigvee_{I\subseteq[n]}\Sigma^{|I|+1}|K_I|$.
This conjecture has been verified for flag complexes~\cite{arXiv:1211.0873,MR2344344}, 
and rationally or localized at large primes~\cite{BerglundRational,BebenGrbic1}.

A simplicial complex $K$ on vertex set $[n]$ is said to be $k$-\emph{neighbourly} if every subset of $k$ or less vertices in $[n]$ is a face of $K$.
That is, $K_I$ is a simplex for each $I\subseteq[n]$, $|I|\leq k$. If $K$ is $k$-neighbourly then it is $k'$-neighbourly for all $k'\leq k$.
When $K$ is $\frac{n}{3}$-neighbourly, we show that a homotopy analogue of Golodness is equivalent to the moment-angle complex $\zk$ being a $co$-$H$-space.

\begin{theorem}
\label{TMain}
Let $K$ be $\frac{n}{3}$-neighbourly. 
Then $\zk$ is a $co$-$H$-space (equivalently, $\zk\simeq \cvee{I\subseteq [n]}{}{\Sigma^{|I|+1}|K_I|}$) if and only if 
$$
\Sigma^{|I\cup J|}\iota_{I,J}\colon\seqm{\Sigma^{|I\cup J|}|K_{I\cup J}|}{}{\Sigma^{|I\cup J|}|K_I\ast K_J|}
$$
is nullhomotopic for each disjoint non-empty $I,J\subsetneq[n]$.
\end{theorem}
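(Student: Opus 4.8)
The plan is to build on the known description of $\zk$ as a polyhedral product and use the fact that, under the $\frac{n}{3}$-neighbourly hypothesis, the relevant pieces $|K_I|$ are highly connected relative to $|I|$. One direction is immediate: if $\zk$ is a $co$-$H$-space then by \eqref{EHochster} and the discussion following it, all the maps $\iota_{I,J}$ are trivial on cohomology with any coefficients, and since the spaces involved are suspensions (so that $[\Sigma^{|I\cup J|}|K_{I\cup J}|,\Sigma^{|I\cup J|}|K_I\ast K_J|]$ is detected by cohomology once a freeness/torsion-free hypothesis or a Hurewicz-range argument is in force), one deduces that $\Sigma^{|I\cup J|}\iota_{I,J}$ is nullhomotopic. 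More carefully, I would run this direction by comparing connectivity: $\frac{n}{3}$-neighbourliness forces $|K_I|$ to be $(|I|-\lceil n/3\rceil-1)$-connected, so for $|I\cup J|$ in the relevant range the suspended map lands in a stable range where cohomology triviality upgrades to nullhomotopy.

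For the substantive direction — triviality of all $\Sigma^{|I\cup J|}\iota_{I,J}$ implies $\zk$ is a $co$-$H$-space — the strategy is to produce a splitting of $\zk$ after a single suspension and then improve it. First I would recall the stable splitting $\Sigma\zk\simeq\bigvee_{I\subseteq[n]}\Sigma^{|I|+2}|K_I|$ of \cite{MR2673742}, which holds unconditionally. The goal is to desuspend this splitting once, i.e.\ to construct a map $\zk\to\bigvee_{I\subseteq[n]}\Sigma^{|I|+1}|K_I|$ that is a homology isomorphism; by Whitehead's theorem (both sides are simply connected when $K$ is at least $2$-neighbourly, which $\frac{n}{3}$-neighbourliness grants for $n\geq 6$, with small cases checked directly) this is a homotopy equivalence, and a wedge of suspensions is a $co$-$H$-space. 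To get the desuspension I would use the neighbourly hypothesis to control the attaching maps in a CW/homotopy-colimit decomposition of $\zk$: the polyhedral product filtration of $\zk$ by the skeleta of the simplex has successive quotients built from the $\Sigma^{|I|+1}|K_I|$, and the obstructions to splitting these off one at a time are precisely (suspensions of) the maps $\iota_{I,J}$ together with higher-order analogues. The $\frac{n}{3}$ bound is exactly what makes the dimensions of the pieces $\Sigma^{|I|+1}|K_I|$ and $\Sigma^{|I'|+1}|K_{I'}|$ separated enough that these higher-order obstructions vanish for dimensional reasons, leaving only the primary obstructions $\Sigma^{|I\cup J|}\iota_{I,J}$, which are nullhomotopic by hypothesis.

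Concretely, I would induct on $n$ or on the size of the faces, at each stage splitting off a top piece $\Sigma^{|I|+1}|K_I|$ from $\zk$. The inductive step amounts to showing a cofibration sequence of the shape $\Sigma^{|I|+1}|K_I|\to \z{K}\to \z{K'}$ (or a homotopy pushout assembling $\zk$ from smaller moment-angle complexes and such a wedge summand) splits; the connecting map is, up to suspension and after identifying homology, a sum of the $\iota_{I,J}$'s, and the neighbourly hypothesis guarantees no room for correction terms. Here the key input from the hypothesis is a \emph{gap lemma}: under $\frac{n}{3}$-neighbourliness, $\tilde H_k(|K_I|)=0$ unless $k\geq |I|-\lceil n/3\rceil$, so the homology of $\Sigma^{|I|+1}|K_I|$ is concentrated in degrees $\geq |I|+2-\lceil n/3\rceil$ and $\leq |I|$, and since $|I|\leq n$ and the next piece up has $|I'|\geq |I|+1$, the overlap of degree ranges is too small to support a nonzero obstruction in the relevant $[\,\cdot\,,\,\cdot\,]$-group.

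The main obstacle I anticipate is the bookkeeping of the higher obstructions: showing that once all the primary maps $\Sigma^{|I\cup J|}\iota_{I,J}$ vanish, \emph{no} secondary or Massey-type obstruction survives. This is where the precise numerical role of $n/3$ (as opposed to, say, $n/2$) must be exploited — the factor $3$ presumably appears because a triple Massey product involves three factors $|K_{I}|,|K_J|,|K_L|$ whose total connectivity deficiency must still leave the product in a trivial group, and one needs $|I|+|J|+|L|\le n$ to even form it. Making this dimension-counting airtight, and organizing the induction so that the splitting maps are compatible across stages (so that one genuinely lands in a single wedge at the end rather than an iterated extension), is the crux; the cohomological direction and the $co$-$H$-space$\Leftrightarrow$wedge equivalence are comparatively routine given the cited results.
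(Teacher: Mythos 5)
Your proposal does not match the paper's argument, and more importantly the sufficiency direction has a genuine gap that your sketch does not close. The paper does not prove sufficiency by inducting up a filtration and killing attaching maps with a dimension count. Instead it imports from~\cite{BebenGrbic1} the notion of \emph{weakly coherently homotopy Golod}: $\zk$ is a co-$H$-space provided a single concrete map $\Phi_{K_I}\colon \Sigma^{|I|}|K_I|\to\Sigma\mc Q_{K_I}$ is nullhomotopic for each $I$ (Proposition~\ref{PCoherence}). The whole content of the proof is Lemma~\ref{LHomotopy}: under $\frac{n}{3}$-neighbourliness, $\Phi'_K$ is homotoped to a composite $\lambda\circ(\bigvee\Sigma^{n-1}\hat\iota_{I,J})\circ\psi_K$ where $\psi_K$ is a comultiplication on $S^{n-1}\wedge\hrzk$ built from the configuration space $\mc A$ of points in $\Int(D^{n-1})$ that cluster into two blobs of size $>n/3$. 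The role of the $\frac{n}{3}$ hypothesis is geometric and two-sided: (a) Lemma~\ref{LComult} shows $\mc A$ decomposes as a disjoint union of open cells $\mc A_{I,J}$ indexed only by partitions $(I,J)$ with $|I|,|J|>n/3$, so only two-fold smash products $\hrz{K_I}\wedge\hrz{K_J}$ appear in the target; and (b) $\hrz{K_{I'}}=(D^1)^{\wedge|I'|}$ for $|I'|\le n/3$, which is what makes the coordinate retraction $\varphi_i$ defining $\lambda_{I,J}$ land back inside $\mc Q'_K$. None of this is a dimension count, and your proposal has no analogue of this factorization.

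Concretely, the two load-bearing steps in your sketch are unsupported or false. First, the asserted \emph{gap lemma} --- that $\tilde H_k(|K_I|)=0$ unless $k\ge|I|-\lceil n/3\rceil$ --- is numerically wrong. An $\frac{n}{3}$-neighbourly $K$ gives $|K_I|$ is $(\lfloor n/3\rfloor-2)$-connected when $|I|>\lfloor n/3\rfloor$ (it contains the full $(\lfloor n/3\rfloor-1)$-skeleton of $\Delta^{|I|-1}$), a bound independent of $|I|$; taking $K$ to be exactly this skeleton and $I=[n]$ already violates your claimed vanishing range once $n$ is moderately large. Second, and more fundamentally, you never identify the attaching maps in the filtration $\zk^\ell$ of $\zk$ with (sums of) the $\iota_{I,J}$'s, nor show that secondary obstructions vanish once the primary ones do. This is precisely the \emph{coherence} issue flagged in the remark after Theorem~\ref{TMain}: vanishing of all $\Sigma^{|I\cup J|}\iota_{I,J}$ is not in general enough, and the paper does not claim a dimension count resolves it --- the factorization through $\psi_K$ is what shows the only relevant obstruction is built out of the $\hat\iota_{I,J}$'s. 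Your intuition that "the factor $3$ kills $3$-fold joins" is the correct heuristic (it is exactly the paper's remark), but the route you propose to realize it --- a Whitehead/homology argument plus dimension-counting on a skeletal filtration --- is not a proof and would need the missing Lemma~\ref{LHomotopy}-type input to close.

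By contrast the necessity direction you outline is close in spirit to the paper's Proposition~\ref{PNecessary}, though the paper's argument is cleaner than a cohomological detection: it uses that the reduced diagonal on a co-$H$-space is nullhomotopic, passes to the quotient $\hz{K_{I\cup J}}$ via Proposition~$2.11$ of~\cite{BebenGrbic1}, and desuspends once by Freudenthal; the neighbourliness hypothesis is not even needed for this implication.
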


\begin{bigremark}
The necessity part of Theorem~\ref{TMain} is true in general
(see Proposition~\ref{PNecessary} below, or Proposition~$5.4$ in~\cite{BebenGrbic1}).
Sufficiency generally holds up to a certain coherence condition~(\cite{BebenGrbic1}, Theorem~$1.2$). 
Since this coherence condition involves joins $|K_{I_1}\ast\cdots\ast K_{I_m}|$ over disjoint non-empty 
$I_1,\ldots,I_m\subsetneq[n]$ and $m\geq 3$, coherence should be given to us for free when $K$ is $\frac{n}{3}$-neighbourly, 
as $3$-or-more-fold joins are contractible in this case. This is the heuristic behind Theorem~\ref{TMain}.
\end{bigremark}

\begin{example}
Suppose $K$ is $\frac{n}{2}$-neighbourly. Then each $|K_I|$ is contractible when $|I|\leq \frac{n}{2}$,
is at least $(\floor{\frac{n}{2}}-1)$-connected, and has dimension at most $n-2$ when $K$ is not a simplex.
From isomorphism~\eqref{EHochster}, or by Theorem~$6.33$~\cite{MR1897064}, 
one sees $\zk$ is contractible when $K$ is a simplex or has the homotopy type of an $n$-connected $CW$-complex of at most dimension $2n-1$.
Thus $\zk$ is a $co$-$H$-space by the Freudenthal suspension theorem. 
This also follows from distinct arguments found in~\cite{BebenGrbic1,IK1}. 
On the other hand, it is an easy application of Theorem~\ref{TMain}.
Since for each disjoint non-empty $I,J\subsetneq[n]$ at least one of $K_I$ or $K_J$ must be a simplex,
each $\seqm{|K_{I\cup J}|}{\iota_{I,J}}{|K_I\ast K_J|}$ must be nullhomotopic since $|K_I\ast K_J|\simeq \Sigma |K_I|\wedge |K_J|$ is contractible,
so $\zk$ is a $co$-$H$-space by Theorem~\ref{TMain}. 
\end{example}

The first examples outside the stable range occur when $K$ is $(\frac{n}{2}-1)$-neighbourly.

\begin{example}
Suppose $K$ is $(\frac{n}{2}-1)$-neighbourly and $n$ is even.
Then either (1) $K=\bd\Delta^{\frac{n}{2}-1}\ast\bd\Delta^{\frac{n}{2}-1}$ and $\zk$ is a sphere product;
(2) $K$ is $\bd\Delta^{\frac{n}{2}-1}\ast\bd\Delta^{\frac{n}{2}-1}$ with some faces removed;    
or else (3) $K$ has only one minimal missing face with $\frac{n}{2}$ vertices.
In the third case, each $\iota_{I,J}$ for disjoint $I$ and $J$ is nullhomotopic since at least one of $|K_I|$ or $|K_J|$ is contractible.
In the second case, $\iota_{I,J}$ is nullhomotopic since $|K|$ is at most $(n-4)$-dimensional and 
$|K_I\ast K_J|$ is at least $(n-4)$-connected. 
Then in the last two cases, $\zk$ is a $co$-$H$-space by Theorem~\ref{TMain}.
\end{example}

Massey products do not factor into Theorem~\ref{TMain}. In view of the isomorphism~\eqref{EHochster} only cup products are relevant. 
This is as might be expected when localized at the rationals, 
since $k$-connected $CW$-complexes of dimension $\leq 3k+1$ are rationally formal by~\cite{MR684544}.
Therefore they have trivial Massey products in their rational cohomology.

\section{Some Background}

\subsection{Conventions}

We take $-1$ to be the basepoint of $D^1=[-1,1]$.
The suspension $\Sigma X$ of a space $X$ is taken to be the reduced suspension 
$D^1\times X/(\{-1,1\}\times X\cup D^1\times\{\ast\})$ whenever $X$ is basepointed with basepoint $\ast$. 
Otherwise it is the unreduced suspension, in other words, the quotient space of $D^1\times X$ under identifications
$(-1,x)\sim \ast_{-1}$ and $(1,x)\sim \ast_{1}$. 
In any case, $\Sigma X$ is always basepointed, in the unreduced case the basepoint taken to be $\ast_{-1}$.

\subsection{Moment-Angle Complexes}

Let $K$ be a simplicial complex on $n$ vertices, and $(X,A)$ a $CW$-pair. 
The polyhedral product $(X,A)^K$ is the subspace of $X^{\times n}$ defined as the union
$$
(X,A)^K=\cunion{\sigma\in K}{}{Y^\sigma_1\times\cdots\times Y^\sigma_n},
$$
where $Y^\sigma_i=X$ if $i\in\sigma$, or else $Y^\sigma_i=A$ if $i\nin\sigma$.
The \emph{moment-angle complex} is $\zk=(D^2,S^1)^K$ and the \emph{real moment-angle complex} is $\rzk=(D^1,S^0)^K$,
where $S^1=\bd D^2$ and $S^0=\{-1,1\}=\bd D^1$.

Given $I\subseteq [n]$, we can think of $\z{K_I}$ and $\rz{K_I}$ as the subspace of $\zk$ and $\rzk$ 
consisting of those points whose coordinates not indexed by $I$ are basepoints. 
In this case we define $\zk^\ell$ and $\rzk^\ell$ to be the unions of $\z{K_I}$ and $\rz{K_I}$ respectively 
over all $I\subseteq [n]$ such that $|I|=\ell$, 
i.e. the subspaces of $\zk$ and $\rzk$ consisting of those points that have at least $n-\ell$ coordinates the basepoint.
The \emph{quotiented moment-angle complexes} are defined as
$
\hzk=\zk/\zk^{n-1}, 
$
$
\hrzk=\rzk/\rzk^{n-1},
$ 
$
\hz{K_I}=\z{K_I}/\z{K_I}^{|I|-1},
$ 
$
\hrz{K_I}=\rz{K_I}/\rz{K_I}^{|I|-1}.
$
Alternatively, they are the images of $\zk$, $\rzk$, $\z{K_I}$, and $\rz{K_I}$ under the quotient maps
\seqm{(D^2)^{\times n}}{}{(D^2)^{\wedge n}}, \seqm{(D^1)^{\times n}}{}{(D^1)^{\wedge n}}, 
\seqm{(D^2)^{\times |I|}}{}{(D^2)^{\wedge |I|}}, and \seqm{(D^1)^{\times |I|}}{}{(D^1)^{\wedge |I|}},
respectively. Then $\hrzk=\bigcup_{\sigma\in K} Y^\sigma_1\wedge\cdots\wedge Y^\sigma_n$, 
where $Y^\sigma_i=D^1$ if $i\in\sigma$, or $Y^\sigma_i=S^0$ if $i\nin\sigma$.

We say that a sequence $(I_1,\ldots,I_m)$ of subsets of a set $I$ is an \emph{(ordered) partition} of $I$ if they are mutually disjoint, 
non-empty, and $I_1\cup\cdots\cup I_m=I$. 
Given a partition $(I_1,\ldots,I_m)$ of $I$ with $|I|=k$ and $I_j=\{i_{j1},\ldots,i_{jk_j}\}$ in increasing order, 
by rearranging coordinates, we can, and usually will, think of the smash product $\hrz{K_{I_1}}\wedge\cdots\wedge\hrz{K_{I_m}}$ 
as the subspace of $(D^1)^{\wedge k}$ given by
\begin{equation}
\label{ERearranged}
\cset{(x_1,\ldots,x_k)\in (D^1)^{\wedge k}}
{(x_{i_{j1}},\ldots,x_{i_{jk_j}})\in\hrz{K_{I_j}}\mbox{ for each }j}
\end{equation}
(the order of the smash product therefore being irrelevant).
Then $\hrz{K_{I_1\cup\cdots\cup I_m}}$ is a subspace of $\hrz{K_{I_1}}\wedge\cdots\wedge\hrz{K_{I_m}}$,  
and we let 
$$
\hat\iota_{I_1,\ldots,I_m}\wcolon\seqm{\hrz{K_{I_1\cup\cdots\cup I_m}}}{}{\hrz{K_{I_1}}\wedge\cdots\wedge\hrz{K_{I_m}}} 
$$
denote the inclusion. This carries over similarly for moment-angle complexes $\zk$. 
By rearranging coordinates there is a homeomorphism $\hz{K_I}\cong \Sigma^{|I|}\hrz{K_I}$,
and so we see that the inclusion
$$
\hat\iota^1_{I_1,\ldots,I_m}\wcolon\seqm{\hz{K_{I_1\cup\cdots\cup I_m}}}{}{\hz{K_{I_1}}\wedge\cdots\wedge\hz{K_{I_m}}}
$$
is homeomorphic to the $k$-fold suspension of $\hat\iota_{I_1,\ldots,I_m}$ above. 

The homotopy type of a quotiented moment angle complex $\hrzk$ is not mysterious,
at least not with regards to the combinatorics of $K$.
The existence of a homeomorphism $\hrz{K_I}\cong \Sigma |K_I|$ was given in~\cite{MR2673742,BebenGrbic1},
and this is obtainable as a restriction of a homeomorphism 
$\Sigma|\Delta^{k-1}|\cong (D^1)^{\wedge k}$ where $|I|=k$. 
More generally, $\Sigma|\Delta^{k-1}|\cong (D^1)^{\wedge k}$ restricts to a homeomorphism
$$
\Sigma |K_{I_1}\ast\cdots\ast K_{I_m}|\cong \hrz{K_{I_1}}\wedge\cdots\wedge \hrz{K_{I_m}}
$$
where $(I_1,\ldots,I_m)$ is a partition of $I$. 
Consequently, $\hat\iota_{I_1,\ldots,I_m}$ is homeomorphic to the suspended inclusion 
$\Sigma\iota_{I_1,\ldots,I_m}\colon\seqm{\Sigma |K_{I_1\cup\cdots\cup I_m}|}{}{\Sigma|K_{I_1}\ast\cdots\ast K_{I_m}|}$.

\subsection{A Few Splitting Conditions}

If $I$ is a subset of $[n]$ with $k=|I|$ elements, and $\mc R=(s_1,\ldots,s_k)$ is any sequence of real numbers, 
let $i_\ell$ denote the $\ell^{th}$ smallest element in $I$, $s'_j$ denote the $j^{th}$ smallest element in the set $S=\{s_1,\ldots,s_k\}$, 
and $m=|S|$ be the number of distinct elements in $\mc R$.
Then assign to $\mc R$ the ordered partition $I_{\mc R}=(I_1,\ldots,I_m)$ of $I$ where $I_j=\cset{i_\ell\in I}{s_\ell=s'_j}$. 
For example, if $I=[4]$ and $\mc R=(-1,\pi,-1,0)$, then $[4]_{\mc R}=(\{1,3\},\{4\},\{2\})$.
Take the diagonal
$$
\vartriangle_n=\cset{(x_1,\ldots,x_n)\in\mb R^n}{x_1=\cdots=x_n}
$$
and consider the following subspace of the smash product $\mc P_n=(\mb R^n-\vartriangle_n)\wedge\Sigma|\Delta^{n-1}|$:
$$
\mc Q_K = 
\cunionmulti{y\in(\mb R^n-\vartriangle_n)}{(I_1,\ldots,I_m)=[n]_{y}}{}{\{y\}\wedge\Sigma |K_{I_1}\ast\cdots\ast K_{I_m}|},
$$
where $[n]_y$ is the ordered partition of $[n]$ for the given sequence $y$, as defined above.
Here we took $\mb R^n-\vartriangle_n$ to be without a basepoint, 
so $\mc P_n$ is the half-smash 
$((\mb R^n-\vartriangle_n)\times\Sigma|\Delta^{n-1}|)/(\mb R^n-\vartriangle_n)\times\{\ast\}$,
and is itself basepointed, as is $\mc Q_K$. 
Given any $\omega=(t_1,\ldots,t_{n-1},t,z)\in \Sigma^n |K|$ ($z\in |K|$ and $t_1,\ldots t_{n-1},t\in[-1,1]$), 
let $\beta=\max\{|t_1|,\ldots,|t_{n-1}|,0\}$, and define the map 
$$
\Phi_K\wcolon\seqm{\Sigma^n |K|}{}{\Sigma\mc Q_K}
$$
by
$$
\Phi_K(\omega)=
\begin{cases}
\ast & \mbox{if }\beta\in\{0,1\};\\
\paren{2\beta-1,(t_1,\ldots,t_{n-1},0),(t,z)} & \mbox{if }0<\beta<1.
\end{cases}
$$
\begin{remark}
Since $(t_1,\ldots,t_{n-1},0)$ cannot be on the diagonal $\vartriangle_n$,
one cannot generally define a nullhomotopy of $\Phi_K$ by trying to shift the parameter $\beta$ to $1$.
\end{remark}

\begin{definition}
$K$ is \emph{weakly coherently homotopy Golod} if $K$ is a single vertex, 
or (recursively) the vertex deletion $K\backslash\{i\}$ is weakly coherently homotopy Golod for each $i\in [n]$, 
and the map $\Phi_K\wcolon\seqm{\Sigma^n |K|}{}{\Sigma\mc Q_K}$ is nullhomotopic. 
\end{definition} 

\begin{bigremark}
Equivalently, 
$K$ is \emph{weakly coherently homotopy Golod} if $\Phi_{K_I}\wcolon\seqm{\Sigma^{|I|} |K_I|}{}{\Sigma\mc Q_{K_I}}$
is nullhomotopic for each $I\subseteq [n]$.
\end{bigremark}

\begin{proposition}[Theorem~$4.3$ in~\cite{BebenGrbic1}]
\label{PCoherence}
If $K$ is weakly coherently homotopy Golod, then $\zk$ is a co-$H$-space.~$\qqed$
\end{proposition}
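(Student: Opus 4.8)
\medskip
\noindent\textbf{Proof idea.}
The plan is to construct a homotopy equivalence $\zk\simeq\cvee{I\subseteq[n]}{}{\hz{K_I}}$; since $\hz{K_I}\cong\Sigma^{|I|+1}|K_I|$ and, by~\cite{arXiv:1306.6221,MR2673742}, $\zk$ is a $co$-$H$-space precisely when such a decomposition exists, this suffices. I would induct on $n$, the case $n=1$ being trivial. For the inductive step, each proper restriction $K_I$ ($I\subsetneq[n]$) is a restriction of some vertex deletion $K\backslash\{i\}$, hence weakly coherently homotopy Golod and on fewer than $n$ vertices, so by induction $\z{K_I}\simeq\cvee{J\subseteq I}{}{\hz{K_J}}$. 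One must take these equivalences \emph{coherently}, i.e.\ compatibly with the inclusions $\z{K_{I'}}\hookrightarrow\z{K_I}$ for all $I'\subseteq I\subsetneq[n]$; this is arranged by choosing them to refine the stable splitting of~\cite{MR2673742}, under which $\Sigma\z{K_{I'}}\hookrightarrow\Sigma\z{K_I}$ is the evident inclusion of sub-wedges.

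Now consider the homotopy cofibration $\zk^{n-1}\hookrightarrow\zk\longrightarrow\hz{K}$. It remains to show (a) $\zk^{n-1}\simeq\cvee{I\subsetneq[n]}{}{\hz{K_I}}$ and (b) that this cofibration is homotopically trivial, for then $\zk\simeq\zk^{n-1}\vee\hz{K}\simeq\cvee{I\subseteq[n]}{}{\hz{K_I}}$ and the induction closes. For (a): $\zk^{n-1}=\cunion{i\in[n]}{}{\z{K_{[n]\setminus\{i\}}}}$ with $\z{K_{[n]\setminus\{i\}}}\cap\z{K_{[n]\setminus\{j\}}}=\z{K_{[n]\setminus\{i,j\}}}$ and so on, so $\zk^{n-1}$ is the homotopy colimit of the $\z{K_I}$ over the poset of proper subsets $I\subsetneq[n]$. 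Substituting the coherent splittings and reorganising, the colimit collapses: a summand $\hz{K_J}$ occurs in $\z{K_I}$ exactly for those $I$ with $J\subseteq I\subsetneq[n]$, a poset with least element $J$ hence contractible, so it contributes a single copy of $\hz{K_J}$ and no higher-chain terms survive, giving~(a).

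For (b) --- the heart of the argument, where the hypothesis enters --- it suffices, since $\hz{K}$ is a suspension, to show the connecting map $\delta\colon\seqm{\hz{K}}{}{\Sigma\zk^{n-1}}$ of the cofibration is nullhomotopic: a nullhomotopy produces a homotopy section of $\seqm{\zk}{}{\hz{K}}$, which together with $\zk^{n-1}\hookrightarrow\zk$ yields $\zk\simeq\zk^{n-1}\vee\hz{K}\simeq\cvee{I\subseteq[n]}{}{\hz{K_I}}$ (the induced map from $\zk^{n-1}\vee\hz K$ being a homology isomorphism, hence a homotopy equivalence). I would identify $\delta$ with the map $\Phi_K$. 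Via the homeomorphisms $\hz{K_{I_1}}\wedge\cdots\wedge\hz{K_{I_m}}\cong\Sigma^{|I|}\bigl(\hrz{K_{I_1}}\wedge\cdots\wedge\hrz{K_{I_m}}\bigr)\cong\Sigma^{|I|}\Sigma|K_{I_1}\ast\cdots\ast K_{I_m}|$ and the inclusions $\hat\iota_{I_1,\dots,I_m}$, one assembles the join strata $|K_{I_1}\ast\cdots\ast K_{I_m}|$ --- ranging over the ordered partitions $(I_1,\dots,I_m)=[n]_y$, $y\in\mathbb{R}^n-\vartriangle_n$, i.e.\ the space $\mc Q_K$ --- into $\zk^{n-1}$ by an explicit (suitably desuspended) map, with respect to which, and with the coherent splittings of~(a), the scalar $\beta=\max\{|t_1|,\dots,|t_{n-1}|,0\}$ appearing in the definition of $\Phi_K$ becomes the radial parameter on $\mathbb{R}^n-\vartriangle_n$ and $\hz{K}\cong\Sigma^n\hrz K$ maps into each stratum through $\hrz K=\hrz{K_{I_1\cup\cdots\cup I_m}}\hookrightarrow\hrz{K_{I_1}}\wedge\cdots\wedge\hrz{K_{I_m}}$; the upshot is that $\delta$ is identified, up to suspension, with $\Phi_K$, so $\Phi_K\simeq\ast$ forces $\delta\simeq\ast$. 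Since weak coherent homotopy Golodness is equivalent to $\Phi_{K_I}\simeq\ast$ for every $I\subseteq[n]$, the required nullhomotopy is available at each stage of the induction, and we conclude $\zk\simeq\cvee{I\subseteq[n]}{}{\hz{K_I}}$. The step I expect to be the main obstacle is making this assembly map and identification precise --- a careful compatibility check, over all ordered partitions at once, between the stable splittings of the lower skeleta and the inclusions $\Sigma|K_{I_1}\ast\cdots\ast K_{I_m}|\hookrightarrow\hrz{K_{I_1}}\wedge\cdots\wedge\hrz{K_{I_m}}$ of the join subspaces.
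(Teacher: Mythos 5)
Your step (b) has a genuine gap at its central reduction. You propose to split the cofibration $\zk^{n-1}\hookrightarrow\zk\longrightarrow\hz{K}$ by showing that the connecting map $\delta\colon\seqm{\hz{K}}{}{\Sigma\zk^{n-1}}$ is nullhomotopic, claiming that a nullhomotopy of $\delta$ produces a section of \seqm{\zk}{}{\hz{K}}. It does not. Here $\zk$ is the mapping cone of an attaching map $f\colon\seqm{\Sigma^{n}|K|}{}{\zk^{n-1}}$ and $\delta$ is (up to sign) $\Sigma f$; nullhomotoping $\Sigma f$ does not nullhomotope $f$, and the cofiber of $f$ need not split when only $\Sigma f$ vanishes. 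Concretely, for $f=2\eta\colon\seqm{S^3}{}{S^2}$ one has $\Sigma f\simeq\ast$ in $\pi_4(S^3)\cong\Z/2$, yet $C_f=S^2\cup_{2\eta}e^4\not\simeq S^2\vee S^4$, so the pinch map $\seqm{C_f}{}{S^4}$ admits no section. (Cofibration sequences are coexact: $\delta\simeq\ast$ controls maps \emph{out of} the sequence, and gives no lift of $\ID_{\hz{K}}$ through $\zk$.) This loss of one suspension is exactly the metastable difficulty the $\frac{n}{3}$-neighbourly machinery is built to confront; if a Freudenthal-type argument let you desuspend $\delta$ back to $f$, you would be in the stable ($\frac{n}{2}$-neighbourly) regime and would not need $\Phi_K$ at all.

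The mechanism actually used (see the Remark following the proof of Proposition~\ref{PNecessary}) differs in a small but essential way: one factors $f$ itself, not $\delta$, as \seqmm{\Sigma^n|K|}{\Sigma f^{ad}}{\Sigma\Omega\zk^{n-1}}{ev}{\zk^{n-1}}, and shows that $\Sigma f^{ad}$ (not $\Sigma f$) factors through $\Phi_K\colon\seqm{\Sigma^n|K|}{}{\Sigma\mc Q_K}$. The strata $\Sigma|K_{I_1}\ast\cdots\ast K_{I_m}|$ of $\mc Q_K$ correspond to the smash products of wedge summands arising in the James/Hilton--Milnor splitting of $\Sigma\Omega$ applied to the inductively obtained wedge decomposition of $\zk^{n-1}$; they live naturally in $\Sigma\Omega\zk^{n-1}$, not in $\Sigma\zk^{n-1}$, which is why your proposed ``assembly of the join strata into $\zk^{n-1}$'' cannot be arranged to identify $\delta$ with $\Phi_K$ in a useful way. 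With the correct factorization, $\Phi_K\simeq\ast$ gives $\Sigma f^{ad}\simeq\ast$, hence $f=ev\circ\Sigma f^{ad}\simeq\ast$ with no desuspension required, and your induction then closes as described. Your part (a), the coherent splitting of $\zk^{n-1}$, is in the right spirit, but the coherence you invoke in passing is precisely what the recursive ``weakly coherently homotopy Golod'' hypothesis and the space $\mc Q_K$ are engineered to supply; it is not free.
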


On the other hand, a somewhat weaker looking condition is a necessary one.

\begin{proposition}[Proposition~$4.4$ in~\cite{BebenGrbic1}]
\label{PNecessary}
If $\zk$ is a co-$H$-space, then each of the suspended inclusions 
$$
\Sigma^{|I\cup J|}\iota_{I,J}\wcolon\seqm{\Sigma^{|I\cup J|} |K_{I\cup J}|}{}{\Sigma^{|I\cup J|} |K_I\ast K_J|}
$$
is nullhomotopic for every disjoint non-empty $I,J\subsetneq [n]$.
\end{proposition}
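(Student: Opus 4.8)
The plan is to leverage the co-$H$ structure on $\zk$ through the inclusion $\z{K_{I\cup J}}\hookrightarrow\z{K_I}\times\z{K_J}$, convert this into the vanishing of $\hat\iota^1_{I,J}$ up to a couple of suspensions, and then apply the Freudenthal suspension theorem to recover the sharp suspension index $|I\cup J|$. Write $L=I\cup J$ and $\ell=|L|$. We may assume $K_L$ is not a simplex (otherwise $|K_L|$ is contractible and there is nothing to prove) and that every element of $[n]$ is a vertex of $K$ (otherwise $\zk$ is a product with circles, and a co-$H$-space of this form forces $|K_L|$, and indeed every $|K_{I'\cup J'}|$, to be contractible).

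\emph{Step 1: a nullhomotopic composite out of $\z{K_L}$.} The projection onto the coordinates indexed by $L$ retracts $\zk$ onto $\z{K_L}$, so $\z{K_L}$ is a retract of $\zk$ and therefore, as a retract of a co-$H$-space, is itself a co-$H$-space. Because $I$ and $J$ are disjoint, $\z{K_L}$ is a subspace of $\z{K_I}\times\z{K_J}=\z{K_I\ast K_J}$ (as $K_L\subseteq K_I\ast K_J$), and this inclusion is the map $(p_I,p_J)$ whose components are the coordinate projections $p_I\colon\z{K_L}\to\z{K_I}$ and $p_J\colon\z{K_L}\to\z{K_J}$. Now for any co-$H$-space $X$ and maps $f\colon X\to Y$, $g\colon X\to Z$, the map $(f,g)\colon X\to Y\times Z$ factors up to homotopy through $Y\vee Z$: indeed $(f,g)=(f\times g)\circ\Delta_X$, and $\Delta_X$ is homotopic to the composite $X\to X\vee X\hookrightarrow X\times X$ since both have the two projections of $X\times X$ as their components; hence the composite of $(f,g)$ with the quotient $Y\times Z\to Y\wedge Z$ is nullhomotopic, the map $Y\vee Z\hookrightarrow Y\times Z\to Y\wedge Z$ being trivial. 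Taking $X=\z{K_L}$, $f=p_I$, $g=p_J$, we conclude that the composite $\z{K_L}\hookrightarrow\z{K_I}\times\z{K_J}\to\z{K_I}\wedge\z{K_J}$ is nullhomotopic.

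\emph{Step 2: detecting $\hat\iota^1_{I,J}$.} Compose further with the smash of the canonical collapses, $\z{K_I}\wedge\z{K_J}\to\hz{K_I}\wedge\hz{K_J}$. Unwinding the definitions, the resulting map $\z{K_L}\to\hz{K_I}\wedge\hz{K_J}$ is the restriction of $(D^2)^{\times\ell}\to(D^2)^{\wedge\ell}$ to $\z{K_L}$ followed by the inclusion of the image, so it equals $\hat\iota^1_{I,J}\circ q_L$, where $q_L\colon\z{K_L}\to\hz{K_L}$ is the canonical collapse. Hence $\hat\iota^1_{I,J}\circ q_L$ is nullhomotopic. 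By the stable splitting $\Sigma\z{K_L}\simeq\bigvee_{L'\subseteq L}\Sigma\hz{K_{L'}}$ of~\cite{MR2673742}, under which $\Sigma q_L$ is the projection onto the summand $\Sigma\hz{K_L}$, the map $\Sigma q_L$ admits a section, so $\Sigma\hat\iota^1_{I,J}$ is nullhomotopic. Since $\hat\iota^1_{I,J}$ is homeomorphic to $\Sigma^{\ell+1}\iota_{I,J}$ (it is the $\ell$-fold suspension of $\hat\iota_{I,J}\cong\Sigma\iota_{I,J}$), this gives that $\Sigma^{\ell+2}\iota_{I,J}$ is nullhomotopic.

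\emph{Step 3: descending to index $\ell$.} It remains to deduce that $\Sigma^\ell\iota_{I,J}$ itself is nullhomotopic. The complex $|K_L|$ has dimension at most $\ell-1$, while $|K_I\ast K_J|=|K_I|\ast|K_J|$ is a join of two non-empty complexes and hence path-connected; thus for every $k\ge\ell-1$ the space $\Sigma^k|K_I\ast K_J|$ is at least $k$-connected and $\dim\Sigma^k|K_L|\le k+\ell-1\le 2k$. By the Freudenthal suspension theorem, suspension then induces a bijection $[\Sigma^k|K_L|,\Sigma^k|K_I\ast K_J|]\to[\Sigma^{k+1}|K_L|,\Sigma^{k+1}|K_I\ast K_J|]$ for all such $k$; iterating from $k=\ell$, we get that $\Sigma^\ell\iota_{I,J}$ is nullhomotopic if and only if $\Sigma^{\ell+2}\iota_{I,J}$ is, and Step 2 finishes the proof. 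I expect the bookkeeping of suspension indices to be the main point requiring care: the co-$H$ argument only sees $\iota_{I,J}$ after two extra suspensions, and it is precisely the dimension-versus-connectivity estimate — valid because $K_L$, $K_I$, $K_J$ are supported on an $\ell$-element vertex set — that brings the conclusion down to the sharp index $\ell$.
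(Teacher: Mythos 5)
Your proof is correct, and the overall architecture (retract to $\z{K_{I\cup J}}$, kill the map into the smash using the co-$H$ structure, identify it with the quotiented inclusion, then Freudenthal down to the stated suspension index) is the same as the paper's. The one genuine divergence is in how you get a section of the quotient map. The paper invokes its own Proposition~$2.11$, which says directly that for a co-$H$-space $\z{K_L}$ the \emph{unsuspended} quotient $\z{K_L}\to\hz{K_L}$ has a right homotopy inverse; it then concludes $\hat\iota^1_{I,J}\simeq\ast$, i.e.\ $\Sigma^{\ell+1}\iota_{I,J}\simeq\ast$, and needs Freudenthal only once. You instead appeal to the Bahri--Bendersky--Cohen--Gitler stable splitting, which only gives a section after one suspension, so you obtain $\Sigma^{\ell+2}\iota_{I,J}\simeq\ast$ and must apply Freudenthal twice. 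That is a fair trade: your version avoids a co-$H$-specific lemma from the cited paper in exchange for a well-known stable splitting plus one extra suspension, and your dimension/connectivity estimate (source of dimension $\leq k+\ell-1\leq 2k$, target $k$-connected, valid for all $k\geq \ell-1$) does support iterating the suspension isomorphism the required two times. Your Step~1 phrasing — factoring $(p_I,p_J)$ through $\z{K_I}\vee\z{K_J}$ — is just a slightly longer route to the same fact the paper uses, that the reduced diagonal of a co-$H$-space is nullhomotopic.

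One small caveat: the aside about ghost vertices is both unnecessary and overstated. The paper's convention is that $K$ has vertex set $[n]$, so every $K_I$ with $I\neq\emptyset$ has at least one vertex and $|K_I\ast K_J|$ is automatically non-empty and path-connected; you do not need to rule out anything. Moreover, the justification you give (that $\zk\simeq\z{K'}\times S^1$ being a co-$H$-space forces every $|K_{I'\cup J'}|$ to be \emph{contractible}) is stronger than what the cup-product argument actually yields — it gives only that these realizations are acyclic (equivalently, stably contractible), which happens to be all you would need, but as stated the claim is not justified. Since the reduction is not used in the main argument, this is a cosmetic rather than substantive issue.
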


We include the proof since it is short and illustrative.

\begin{proof}[Proof of Proposition~\ref{PNecessary}]
Recall that $Y$ is a co-$H$-space with comultiplication \seqm{Y}{\psi}{Y\vee Y} if and only if 
the diagonal map \seqm{Y}{\vartriangle}{Y\times Y} is homotopic to the composite
\seqmm{Y}{\psi}{Y\vee Y}{include}{Y\times Y}. In particular, this implies that the reduced diagonal map
$\bar\vartriangle\wcolon\seqmmm{Y}{\vartriangle}{Y\times Y}{}{(Y\times Y)/(Y\vee Y)}{\cong}{Y\wedge Y}$
is nullhomotopic whenever $Y$ is a co-$H$-space.

Suppose $\zk$ is a co-$H$-space. Since $\z{K_{I\cup J}}$ is a retract of $\zk$, 
then $\z{K_{I\cup J}}$ is a co-$H$-space for any disjoint non-empty $I,J\subsetneq [n]$, 
and so \seqm{\z{K_{I\cup J}}}{\bar\vartriangle}{\z{K_{I\cup J}}\wedge \z{K_{I\cup J}}} is nullhomotopic. 
Then the composite 
$$
\seqmmm{\z{K_{I\cup J}}}{\bar\vartriangle}{\z{K_{I\cup J}}\wedge\z{K_{I\cup J}}}{}{\z{K_I}\wedge\z{K_J}}{}{\hz{K_I}\wedge \hz{K_J}}
$$
is nullhomotopic (where the second last map is the smash of the coordinate-wise projection maps onto $\z{K_I}$ and $\z{K_J}$, 
and the last map is the smash of quotient maps), and since it is equal to
$$
\seqmm{\z{K_{I\cup J}}}{quotient}{\hz{K_{I\cup J}}}{\hat\iota^1_{I,J}}{\hz{K_I}\wedge \hz{K_J}},
$$
this last composite is nullhomotopic as well. 
But $\z{K_{I\cup J}}$ is a co-$H$-space, so by Proposition~$2.11$ in~\cite{BebenGrbic1}, 
the quotient map \seqm{\z{K_{I\cup J}}}{}{\hz{K_{I\cup J}}} has a right homotopy inverse, implying $\hat\iota^1_{I,J}$ is nullhomotopic. 
Since $\hat\iota^1_{I,J}$ is up to homeomorphism the $(|I\cup J|)$-fold suspension of 
$\hat\iota_{I,J}\wcolon\seqm{\hrz{K_{I\cup J}}}{}{\hrz{K_I}\wedge \hrz{K_J}}$,
and since $\hat\iota_{I,J}$ is $\Sigma\iota_{I,J}$ up to homeomorphism, then $\Sigma^{|I\cup J|+1}\iota_{I,J}$ is nullhomotopic.

Now $\Sigma^{|I\cup J|}|K_{I\cup J}|$ is at most $(2(|I|+|J|)-1)$-dimensional and
$\Sigma^{|I\cup J|}|K_I\ast K_J|\simeq \Sigma^{|I\cup J|+1}|K_I|\wedge|K_J|$ is at least $(|I|+|J|)$-connected,   
so the suspension homomorphism
$$
\seqm{\sbr{\,\Sigma^{|I\cup J|}|K_{I\cup J}|\,,\,\Sigma^{|I\cup J|}|K_I\ast K_J|\,}}{}
{\sbr{\,\Sigma^{|I\cup J|+1}|K_{I\cup J}|\,,\,\Sigma^{|I\cup J|+1}|K_I\ast K_J|\,}} 
$$
is an isomorphism by the Freudenthal suspension theorem (\cite{MR1793722}, Theorem~$4.10$).
Thus $\Sigma^{|I\cup J|}\iota_{I,J}$ is nullhomotopic since $\Sigma^{|I\cup J|+1}\iota_{I,J}$ is.

\end{proof}

\begin{bigremark}

Proposition~\ref{PCoherence} is a consequence of the following.
Given any map $f\colon\seqm{\Sigma X}{}{Y}$, $f$ factors as \seqmm{\Sigma X}{\Sigma f^{ad}}{\Sigma\Omega Y}{ev}{Y} 
where $f^{ad}$ is the adjoint of $f$ and $ev$ the adjoint of the identity $\ID_{\Omega Y}$.
Taking $f$ to be an attaching map \seqm{\Sigma^n|K|}{}{\zk^{n-1}} whose cofiber is $\zk$,
it was shown in~\cite{BebenGrbic1} that $\Sigma f^{ad}$ factors through \seqm{\Sigma^n|K|}{\Phi_K}{\Sigma\mc Q_K}.
Then Proposition~\ref{PCoherence} follows from the fact that $f$ is nullhomotopic if and only if $\Sigma f^{ad}$ is.
\end{bigremark}

\subsection{A Reformulation of $\mc Q_K$}

It will be convenient to redefine $\mc Q_K$ and $\Phi_K$ in terms of quotiented moment-angle complexes.
Take the following subspace of $(\mb R^n-\vartriangle_n)\wedge (D^1)^{\wedge n}$: 
$$
\mc Q'_K \cong 
\cunionmulti{y\in(\mb R^n-\vartriangle_n)}{(I_1,\ldots,I_m)=[n]_{y}}{}{\{y\}\wedge\hrz{K_{I_1}}\wedge\cdots\wedge\hrz{K_{I_m}}}.
$$
As remarked before, there is a homeomorphism $\Sigma|\Delta^{n-1}|\cong (D^1)^{\wedge n}$ 
restricting for each partition $(I_1,\ldots,I_m)$ of $[n]$ to a homeomorphism
$\Sigma |K_{I_1}\ast\cdots\ast K_{I_m}|\cong \hrz{K_{I_1}}\wedge\cdots\wedge \hrz{K_{I_m}}$, 
such the inclusion $\hat\iota_{I_1,\ldots,I_m}$ is up to homeomorphism the suspended inclusion 
$\Sigma\iota_{I_1,\ldots,I_m}$. Thus, $\hrzk\cong \Sigma |K|$ and 
there is a homeomorphism $(\mb R^n-\vartriangle_n)\wedge \Sigma|\Delta^{n-1}|\cong(\mb R^n-\vartriangle_n)\wedge (D^1)^{\wedge n}$
restricting to a homeomorphism
$$
\mc Q_K \cong \mc Q'_K,
$$
with respect to which $\Phi_K$ is the map 
$$
\Phi'_K\wcolon\seqm{\Sigma^{n-1}\hrzk}{}{\Sigma\mc Q'_K}
$$
defined by: 
$\Phi'_K(t_1,\ldots,t_{n-1},x)=\paren{2\beta-1,(t_1,\ldots,t_{n-1},0),x}$ if $\beta=\max\{|t_1|,\ldots,|t_{n-1}|,0\}$ is between $0$ and $1$,
otherwise $\Phi'_K$ maps to the basepoint. 
We will work with $\mc Q'_K$ and $\Phi'_K$ instead of $\mc Q_K$ and $\Phi_K$ from now on.

\subsection{Clusters and Comultiplication}

Given integers $m\leq n$, a metric space $M$, a fixed point $x_n\in M$, and a function $f\colon\seqm{M^{\times(n-1)}}{}{\mb R}$,  
consider the configuration space $\mc A\subseteq M^{\times (n-1)}$ consisting of all points $y=(x_1\ldots x_{n-1})\in M^{\times (n-1)}$ 
such that each $x_i$ (including $x_n$) has a subset of $m$ of the $x_j$'s ($j\neq i$) clustered around it within distance $f(y)$.
We describe the topology of $\mc A$ when $M$ is the interval $(0,1)$, $m=\floor{\frac{n}{3}}$, $x_n=0$,
and $f(y)$ is proportional to the maximal distance between the $x_i$'s.

As a convention, $\min\emptyset=0$ and $\max\emptyset=0$.
If $y=(t_1,\ldots,t_{n-1})\in\mb R^{n-1}$, let $\mr y=(t_1,\ldots,t_{n-1},0)\in\mb R^n$.
Given $I\subseteq [n]$, $n\geq 2$, and $z=(t_1,\ldots,t_n)\in\mb R^n$, define the following non-negative values
$$
\nu^{z}_{I,i}=\min\cset{\max\cset{|t_i-t_j|}{j\in S}}{S\subseteq I-\{i\}\mbox{ and }|S|=\floor{\frac{n}{3}}},
$$
$$
\nu^{z}_I=\max\cset{\nu^{z}_{I,i}}{i\in I},
$$
$$
\delta^{z}=\frac{1}{n}(\max\{t_1,\ldots,t_n\}-\min\{t_1,\ldots,t_n\}).
$$
In other words, $\nu^{z}_{I,i}$ is the smallest value such that the interval $[t_i-\nu^{z}_{I,i},t_i+\nu^{z}_{I,i}]$ 
contains at least $\floor{\frac{n}{3}}$ of the $t_j$'s over $j\in I-\{i\}$, and is zero if no such value exists.
Consider the subspace $\mc A$ of the interior $\Int(D^{n-1})$ of the $(n-1)$-disk $D^{n-1}=[-1,1]^{\times (n-1)}$ given by
$$
\mc A = \cset{y\in \Int(D^{n-1})}{\nu^{\mr y}_{[n]}<\delta^{\mr y}}.
$$
Take the subset of ordered partitions of $[n]$
$$
\mc O = \cset{(I,J)}{\,I\cup J=[n],\,I\cap J=\emptyset,\,\mbox{and }|I|,|J|>\frac{n}{3}},
$$
and for each $(I,J)\in\mc O$, consider the subspace of $\Int(D^{n-1})$  
$$
\mc A_{I,J}=\cset{y=(t_1,\ldots,t_{n-1})\in\Int(D^{n-1})}
{t_j-t_i>\delta^{\mr y},\,\nu^{\mr y}_{I,i}<\delta^{\mr y},\,\nu^{\mr y}_{J,j}<\delta^{\mr y}\mbox{ for }i\in I,\,j\in J,\mbox{ where }t_n=0}.
$$
Let
$$
\hat{\mc A}_{I,J}=\cl(\mc A_{I,J})/\partial \mc A_{I,J}%(\cl(\mc A_{I,J})-\mc A_{I,J})
$$
where $\cl(\mc A_{I,J})$ is the closure of $\mc A_{I,J}$, and $\partial \mc A_{I,J}$ is its boundary.

\begin{lemma}
\label{LComult}
$(i)$~The $\mc A_{I,J}$'s are mutually disjoint, 
and $\mc A$ is a union of $\mc A_{I,J}$ over all partitions $(I,J)\in\mc O$; and 
$(ii)$~each $\mc A_{I,J}$ is an open subspace of $\Int(D^{n-1})$ homeomorphic to the interior of $D^{n-1}$.
Hence 
$$
D^{n-1}/(D^{n-1}-\mc A)
\quad\cong\quad \cvee{(I,J)\in\mc O}{}{\hat{\mc A}_{I,J}}
\quad\cong\quad \cvee{(I,J)\in\mc O}{}{S^{n-1}}
$$
where $\hat{\mc A}_{I,J}\cong S^{n-1}$ for each $(I,J)\in\mc O$.
\end{lemma}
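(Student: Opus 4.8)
The plan is to establish the combinatorial decomposition (i) by a counting argument, to prove the substantive half of (ii) --- that each $\mc A_{I,J}$ is an open $(n-1)$-cell --- by recognising $\mc A_{I,J}$ as a bounded open \emph{star-shaped} region about a carefully chosen centre, and then to read off the chain of homeomorphisms by routine point-set topology. For (i), the inclusion $\mc A_{I,J}\subseteq\mc A$ is immediate because $\nu^{\mr y}_{[n],i}\le\nu^{\mr y}_{I,i}$ for any $I\subseteq[n]$ (the minimum defining $\nu$ ranges over a larger family of index sets $S$). For the reverse inclusion, given $y\in\mc A$ I would first note $\delta^{\mr y}>0$ (otherwise all coordinates of $\mr y$ vanish, $y=0$, and $\nu^{\mr y}_{[n]}<\delta^{\mr y}$ fails), then sort the $n$ coordinates of $\mr y$ and observe that their $n-1$ consecutive gaps sum to $n\delta^{\mr y}$, so at least one gap exceeds $\delta^{\mr y}$; and that \emph{exactly one} can, since two gaps exceeding $\delta^{\mr y}$ would cut $[n]$ into three consecutive blocks across which no coordinate can help another coordinate satisfy the clustering condition $\nu^{\mr y}_{[n]}<\delta^{\mr y}$, forcing each block to have size $\ge\floor{n/3}+1$ and hence $n\ge 3\floor{n/3}+3>n$. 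The partition $(I,J)$ cut out by the unique large gap then lies in $\mc O$ and satisfies $y\in\mc A_{I,J}$. Disjointness of the $\mc A_{I,J}$'s is the same count: two distinct partitions valid at one $y$ are nested cuts of the sorted coordinates, say $I\subsetneq I'$ (after swapping the two partitions if needed), and taking $k\in I'\setminus I\subseteq J$, the condition $\nu^{\mr y}_{I',k}<\delta^{\mr y}$ together with the fact that the $I$-coordinates sit more than $\delta^{\mr y}$ below $t_k$ forces $|I'\setminus I|\ge\floor{n/3}+1$, which with $|I|,|J'|>n/3$ again gives $n>n$.

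For (ii), openness of $\mc A_{I,J}$ is clear, as it is cut out by finitely many strict inequalities between continuous functions of $y$. The device is to take as centre the point $c$ whose $I$-coordinates are all equal to a common value $a$ and whose $J$-coordinates are all equal to a common value $b$, with $a<b$ and one of $a,b$ equal to $0$ (namely $t_n$, according as $n\in I$ or $n\in J$); one checks $c\in\mc A_{I,J}$ directly. The point of this choice is that along any ray through $c$, parametrised as $y=c+sv$ with $s\ge 0$, the functions $s\mapsto\nu^{\mr y}_{I,i}$ and $s\mapsto\nu^{\mr y}_{J,j}$ are \emph{linear} --- because $|t_i-t_j|$ scales linearly with $s$ whenever $i,j$ lie in a common block of $c$ --- while $s\mapsto\delta^{\mr y}$ is convex, and is affine so long as the $I$-block stays weakly below the $J$-block. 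Since the condition ``$I$-block strictly below $J$-block'' is convex (it is $t_j-t_i-\delta^{\mr y}>0$, linear minus convex), it propagates along the segment from $c$ to any $y\in\mc A_{I,J}$; on that segment $\delta^{\mr y}$ is affine, hence each $\delta^{\mr y}-\nu^{\mr y}_{I,i}$ and $\delta^{\mr y}-\nu^{\mr y}_{J,j}$ is affine, and being positive at both endpoints it stays positive throughout. So $(1-s)c+sy\in\mc A_{I,J}$, and $\mc A_{I,J}$ is star-shaped about $c$.

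To upgrade ``bounded open star-shaped'' to ``homeomorphic to $\Int(D^{n-1})$, with $\hat{\mc A}_{I,J}\cong S^{n-1}$'', I would show the radial function $\rho\colon S^{n-2}\to(0,\infty)$ of $\mc A_{I,J}$ about $c$ is continuous. Lower semicontinuity is automatic from openness; for upper semicontinuity, write $F$ for the minimum of the finitely many defining functions (so $\mc A_{I,J}=\{F>0\}$) and note that the linearity/affineness observations above show $s\mapsto F(c+sv)$ is \emph{concave} on the interval where it is non-negative. Were $\rho$ to drop at some limiting direction $\theta_0$, then along that ray $F$ would be positive on $[0,\rho(\theta_0))$ and then vanish on $[\rho(\theta_0),\rho^*)$ --- impossible for a concave function positive at the origin. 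Continuity of $\rho$ makes the radial map $D^{n-1}\to\cl(\mc A_{I,J})$, $x\mapsto c+\rho(x/|x|)\,x$ (and $0\mapsto c$), a continuous bijection of compact Hausdorff spaces, hence a homeomorphism carrying $S^{n-2}$ onto $\partial\mc A_{I,J}$; thus $\mc A_{I,J}\cong\Int(D^{n-1})$ and $\hat{\mc A}_{I,J}=\cl(\mc A_{I,J})/\partial\mc A_{I,J}\cong D^{n-1}/S^{n-2}\cong S^{n-1}$.

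It remains to assemble the quotient. By (i) the $\mc A_{I,J}$'s are pairwise disjoint open subsets of $\Int(D^{n-1})$ with union $\mc A$, so collapsing $D^{n-1}-\mc A$ distributes over the decomposition: there is a continuous bijection $D^{n-1}/(D^{n-1}-\mc A)\to\bigvee_{(I,J)\in\mc O}D^{n-1}/(D^{n-1}-\mc A_{I,J})$, which is a homeomorphism since both sides are compact Hausdorff and $\mc O$ is finite; and each $D^{n-1}/(D^{n-1}-\mc A_{I,J})\cong\cl(\mc A_{I,J})/\partial\mc A_{I,J}=\hat{\mc A}_{I,J}\cong S^{n-1}$ by the same compact-Hausdorff continuous-bijection principle. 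This produces the displayed chain of homeomorphisms. The main obstacle is the ``open cell'' half of (ii): a priori $\mc A_{I,J}$ is carved out using the genuinely non-convex functions $\nu^{\mr y}_{I,i}$ (each a minimum of convex functions) and need not itself be convex, so one must locate a centre relative to which these functions linearise along rays --- this single observation simultaneously gives star-shapedness and the continuity of the radial function. Part (i) and the assembling step are then just counting and standard point-set topology.
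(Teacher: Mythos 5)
Your proof is correct and takes essentially the same approach as the paper: both arguments establish part~$(i)$ by locating the unique large sorted gap and running the $n \geq 3\floor{n/3}+3$ count, and both establish part~$(ii)$ by identifying the same centre point (the paper's $b$, your $c$) relative to which the $\nu$ functions become linear and $\delta$ becomes affine, giving star-shapedness. Your added concavity argument for continuity of the radial function fills in a step that the paper leaves implicit when it passes from ``bounded open star-shaped'' to ``homeomorphic to $\Int(D^{n-1})$,'' but this is a detail rather than a different route.
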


\begin{proof}[Proof of $(i)$]

Take any distinct $(I,J),(I',J')\in\mc O$, and let $t_n=0$ and $y=(t_1,\ldots,t_{n-1})\in\mb R^{n-1}$.
Since we have $t_i<t_j$ when $y\in\mc A_{I,J}$, $i\in I$ and $j\in J$; or when $y\in\mc A_{I',J'}$, $i\in I'$ and $j\in J'$;  
then $\mc A_{I,J}\cap\mc A_{I',J'}=\emptyset$ whenever $I\cap J'\neq\emptyset$ and $J\cap I'\neq\emptyset$.
Suppose $I\cap J'=\emptyset$ or $J\cap I'=\emptyset$. Since $(I,J)$ and $(I',J')$ are distinct partitions of $[n]$, 
then one of $I'\subsetneq I$, $I\subsetneq I'$, $J'\subsetneq J$, or $J\subsetneq J'$ holds. 
Without loss of generality, suppose $I'\subsetneq I$. 
If $y\in\mc A_{I,J}$, we have $\nu^{\mr y}_{I,i}<\delta^{\mr y}$ for any $i\in I$, so given that $i\in I-I'\subseteq J'$, 
there must be an $S\subseteq I$ such that $|S|=\floor{\frac{n}{3}}$ and $\max\cset{|t_i-t_j|}{j\in S}<\delta^{\mr y}$.  
Since $|I|,|J|,|I'|>\frac{n}{3}$ and $I'\subsetneq I$, then $|I-I'|<\frac{n}{3}$, so $S$ must contain at least one element $i'$ in $I'$.
But if $y\in\mc A_{I',J'}$, then $\max\cset{|t_i-t_j|}{j\in S}\geq |t_i-t_{i'}|\geq\delta^{\mr y}$ since $i\in J'$ and $i'\in I'$, a contradiction.
Thus $\mc A_{I,J}\cap\mc A_{I',J'}=\emptyset$ here as well. In summary, $\mc A_{I,J}$ and $\mc A_{I',J'}$ are disjoint for any distinct $(I,J),(I',J')\in\mc O$.

Suppose that $y\in\mc A$. 
Let $a=\min\{t_1,\ldots,t_n\}$ and $b=\max\{t_1,\ldots,t_n\}$. Since there are at most $n-2$ $t_i's$ in the interval $(a,b)$, 
there is an $l$ and an $r$ in $[n]$ such that there are no $t_i$'s in the interval $(t_l,t_r)$,
and $t_r-t_l>\frac{1}{n}(b-a)=\delta^{\mr y}$.
Then we have disjoint non-empty sets $I=\cset{i\in [n]}{t_i\leq t_l}$ and $J=\cset{j\in [n]}{t_j\geq t_r}$ partitioning $[n]$
with $t_j-t_i>\delta^{\mr y}$ for every $i\in I$ and $j\in J$. 
Notice that this last fact implies for every $k\in I$ there cannot be an $S\subseteq [n]-\{k\}$ such that $S$ 
contains an element in $J$ and: $(\ast)$ $|S|=\floor{\frac{n}{3}}$ and $\max\cset{|t_k-t_j|}{j\in S}<\delta^{\mr y}$. 
But since $\nu^{\mr y}_{[n]}<\delta^{\mr y}$ (as $y\in\mc A$), 
there is for each $k\in I$ at least one such $S\subseteq [n]-\{k\}$ such that $(\ast)$ holds, 
and we must have $S\subseteq I$ since $S$ cannot contain elements in $J$.  
Therefore $|I|>\frac{n}{3}$ and $\nu^{\mr y}_{I,k}<\delta^{\mr y}$ for every $k\in I$. 
Similarly, $|J|>\frac{n}{3}$ and $\nu^{\mr y}_{J,k}<\delta^{\mr y}$ for every $k\in J$.
It follows that $(I,J)$ is in $\mc O$, and $y\in\mc A_{I,J}$. Thus $\mc A\subseteq\bigcup_{(I,J)\in\mc O}\mc A_{I,J}$.

On the other hand, if $y\in A_{I,J}$, since either $\nu^{\mr y}_{I,i}<\delta^{\mr y}$ or $\nu^{\mr y}_{J,i}<\delta^{\mr y}$ for any $i\in[n]$, 
depending on whether $i\in I$ or $i\in J$, then $\nu^{\mr y}_{[n],i}<\delta^{\mr y}$ for each $i$, implying that $\nu^{\mr y}_{[n]}<\delta^{\mr y}$. 
Therefore $y\in\mc A$. Thus $\bigcup_{(I,J)\in\mc O}\mc A_{I,J}\subseteq\mc A$.% so  $\mc A=\bigcup_{(I,J)\in\mc O}\mc A_{I,J}$.

\vspace{.4cm} 
  
\noindent{\it Proof of $(ii)$.}
Notice that $\mc A$ is an open subspace of $\Int(D^{n-1})$ since it does not contain any points in its boundary 
$\cset{y\in \Int(D^{n-1})}{\nu^{\mr y}_{[n]}=\delta^{\mr y}}$. Thus $\mc A_{I,J}$ is also open by part~$(i)$. 

Given $(I,J)\in\mc O$, consider the point $b=(b_1,\ldots,b_{n-1})$ in $\mc A_{I,J}$ whose coordinates are defined 
for indices $i\in I$ and $j\in J$ by either:
$(1)$ $b_i=-\frac{1}{2}$ and $b_j=0$ if $n\in J$; or else $(2)$ $b_i=0$ and $b_j=\frac{1}{2}$ if $n\in I$.
Given $y\in\mc A_{I,J}$, let $y_t=(1-t)y+tb$. 
Notice that $\delta^{\mr y_t}=(1-t)\delta^{\mr y}+t(\frac{1}{2n})$, $\nu^{\mr y_t}_{I,i}=(1-t)\nu^{\mr y}_{I,i}$, 
$\nu^{\mr y_t}_{J,j}=(1-t)\nu^{\mr y}_{J,j}$ and $((1-t)t_j+tb_j)-((1-t)t_i+tb_i)=(1-t)(t_j-t_i)+t(\frac{1}{2})$ for any $i\in I$ and $j\in J$,
where $t_n=0$. It follows that $y_t$ is in $\mc A_{I,J}$ for each $t\in[0,1]$ whenever $y$ is,
and $\mc A_{I,J}$ deformation retracts onto the point $b$ via the homotopy sending $y\mapsto y_t$ at each time $t$.
Moreover, this is a linear homotopy with $y_t$ lying on the line through $b$ and $y$,
and $\mc A_{I,J}$ is an open subspace of $\Int(D^{n-1})$. We conclude that $\mc A_{I,J}$ is homeomorphic to $\Int(D^{n-1})$. 
 
\end{proof}

Since $\mc A$ is in the interior of $D^{n-1}$, $\bd D^{n-1}\subseteq D^{n-1}-\mc A$.
Of use to us will be the comultiplication
$$
\psi\wcolon\seqmm{S^{n-1}\cong D^{n-1}/\bd D^{n-1}}{}{D^{n-1}/(D^{n-1}-\mc A)}{\cong}
{\cvee{(I,J)\in\mc O}{}{}{S^{n-1}}}
$$
the first map being the quotient map, 
the last map the wedge of homeomorphisms \seqm{\hat{\mc A}_{I,J}}{}{S^{n-1}} 
restricting away from the basepoint to the homeomorphism 
$$
h\wcolon\seqm{\mc A_{I,J}}{}{\Int(D^{n-1})}
$$ 
from Lemma~\ref{LComult}.
On $S^{n-1}\wedge\hrzk=\Sigma^{n-1}\hrzk$, this gives the comultiplication 
$$
\psi_K=\psi\wedge \ID\wcolon\seqm{\Sigma^{n-1}\hrzk}{}
{\cvee{(I,J)\in\mc O}{}{\Sigma^{n-1}\hrzk}}
$$
where $\ID\colon\seqm{\hrzk}{}{\hrzk}$ is the identity.

\section{Proof of Theorem~\ref{TMain}}

Suppose $K$ is $\frac{n}{3}$-neighbourly. 
Notice that $\rz{K_{I'}}=(D^1)^{\times |I'|}$ whenever $|I'|\leq\frac{n}{3}$, in which case we have $\hrz{K_{I'}}=(D^1)^{\wedge |I'|}$. 
Thinking of $\hrz{K_I}\wedge\hrz{K_J}$ as the subspace of $(D^1)^{\wedge n}$ with coordinates rearranged as in \eqref{ERearranged}, 
we can define for any $(I,J)\in\mc O$ a map
$$
\lambda_{I,J}\wcolon\seqm{\Sigma^{n-1}\hrz{K_I}\wedge\hrz{K_J}}{}{\Sigma\mc Q'_K}
$$
that is given for any $\omega=(t_1,\ldots,t_{n-1},(x_1,\ldots,x_n))$ by
$$
\lambda_{I,J}(\omega)=
\begin{cases}
\ast & \mbox{if }\beta\in\{0,1\};\\
(2\beta-1,(t'_1,\ldots,t'_{n-1},0),(\varphi_1(x_1),\ldots,\varphi_n(x_n))) & \mbox{if }0<\beta<1,
\end{cases}
$$
where $\beta=\max\{|t_1|,\ldots,|t_{n-1}|,0\}$,
$y'=(t'_1,\ldots,t'_{n-1})=h^{-1}(t_1,\ldots,t_{n-1})$ for
$h^{-1}\colon\seqm{\Int(D^{n-1})}{\cong}{\mc A_{I,J}}$ the inverse of the homeomorphism $h$ in the definition of $\psi_K$,
and
%(note $(t_1,\ldots,t_{n-1})\in\Int(D^{n-1})$ only when $\beta<1$).
$$
\varphi_i(x_i)=(1+x_i)\paren{\max\br{\,0\,,\,\frac{\delta^{\mr y'}-\nu^{\mr y'}_{[n],i}\,}{\delta^{\mr y'}}}}-1.
$$
The $t'_i$'s cannot all be $0=t'_n$ since by definition of $\mc A_{I,J}$, we have $t'_i<t'_j$ when $i\in I$ and $j\in J$, 
thus $\lambda_{I,J}(\omega)$ is in $\Sigma(\mb R^n-\vartriangle_n)\wedge (D^1)^{\wedge n}$ at the very least. 
By this last fact, and since we have $\varphi_i(x_i)=x_i$ for any $i$ such that there is an $I'\subseteq [n]$ 
with $i\in I'$, $|I'|>\frac{n}{3}$, and $t'_i=t'_j$ for any $j\in I'$, it follows that $\lambda_{I,J}(\omega)\in\Sigma\mc Q'_K$.
This is necessary since $\hrz{K_{I'}}$ is not necessarily $(D^1)^{\wedge |I'|}$ when $|I'|>\frac{n}{3}$. 
Otherwise, when $|I'|\leq\frac{n}{3}$, we have $\hrz{K_{I'}}=(D^1)^{\wedge |I'|}$, 
meaning $\varphi_i(x_i)$ is free to take on any value in $D^1=[-1,1]$.

\begin{lemma}
\label{LHomotopy}
Suppose $K$ is $\frac{n}{3}$-neighbourly, $n\geq 2$. Then $\Phi'_K\colon\seqm{\Sigma^{n-1}\hrzk}{}{\Sigma\mc Q'_K}$ is homotopic to the composite
$$
\zeta\,\wcolon\,\seqmmm{\Sigma^{n-1}\hrzk}{\psi_K}{\cvee{(I,J)\in \mc O}{}{\Sigma^{n-1}\hrzk}}{}{\cvee{(I,J)\in \mc O}{}{\Sigma^{n-1}\hrz{K_I}\wedge\hrz{K_J}}}{}{\Sigma\mc Q'_K}
$$
where the second map is the wedge of the suspended inclusions \seqm{\Sigma^{n-1}\hrzk}{\Sigma^{n-1}\hat\iota_{I,J}}{\Sigma^{n-1}\hrz{K_I}\wedge\hrz{K_J}} over $(I,J)\in \mc O$,
and the last map restricts to $\lambda_{I,J}$ on a summand $\Sigma^{n-1}\hrz{K_I}\wedge\hrz{K_J}$.
\end{lemma}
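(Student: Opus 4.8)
The plan is to realise the homotopy in two stages, $\Phi'_K\simeq\Psi\simeq\zeta$, where the auxiliary map $\Psi$ is obtained from $\Phi'_K$ by post‑composing with a self‑homotopy‑equivalence of $\mc Q'_K$ that shrinks the ``small'' pieces. Let $\varphi^z_i$ denote the function $\varphi_i$ from the definition of $\lambda_{I,J}$, regarded as a function of its base point $z\in\mb R^n-\vartriangle_n$, namely $\varphi^z_i(x_i)=(1+x_i)\max\{0,(\delta^z-\nu^z_{[n],i})/\delta^z\}-1$ (well defined since $\delta^z>0$ off the diagonal). Define $\rho\colon\seqm{\mc Q'_K}{}{\mc Q'_K}$ by sending $\{z\}\wedge(x_1,\ldots,x_n)$ to $\{z\}\wedge(\varphi^z_1(x_1),\ldots,\varphi^z_n(x_n))$. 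The first thing to verify is that $\rho$ lands in $\mc Q'_K$, and this is the only place where $\frac{n}{3}$‑neighbourliness is needed: for $[n]_z=(I_1,\ldots,I_m)$ with $|I_\ell|>\frac{n}{3}$ one has $\nu^z_{[n],i}=0$ for $i\in I_\ell$ (the $\floor{\frac{n}{3}}$ indices of $I_\ell\setminus\{i\}$ already cluster at $z_i$), so $\rho$ fixes those coordinates and they remain in $\hrz{K_{I_\ell}}$, while for $|I_\ell|\le\frac{n}{3}$ one has $\hrz{K_{I_\ell}}=(D^1)^{\wedge|I_\ell|}$ and the shrunk coordinates are unconstrained. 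The linear interpolation $\rho_s$ (replacing $\max\{0,(\delta^z-\nu^z_{[n],i})/\delta^z\}$ by $(1-s)+s\max\{0,(\delta^z-\nu^z_{[n],i})/\delta^z\}$) is a homotopy from $\rho_0=\ID$ to $\rho_1=\rho$ through maps of the same kind, so $\Phi'_K\simeq\Sigma\rho\circ\Phi'_K=:\Psi$. Unwinding the definitions, $\Psi$ takes the basepoint value unless the suspension coordinates $y=(t_1,\ldots,t_{n-1})$ lie in $\mc A$ (off $\mc A$ some $\varphi^{\mr y}_i$ outputs $-1$), and for $y\in\mc A_{I,J}$ one has $\Psi(y,x)=(2\beta(y)-1,(y,0),(\varphi^{\mr y}_1(x_1),\ldots,\varphi^{\mr y}_n(x_n)))$ with $\beta(y)=\max\{|t_1|,\ldots,|t_{n-1}|,0\}$; comparing with the definition of $\lambda_{I,J}$, this agrees with $\zeta(y,x)$ except that the scalar $\beta$ is computed from $y$ itself rather than from $h(y)$.

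For the second stage I would write down the explicit homotopy reparametrising this scalar cell by cell. By Lemma~\ref{LComult}$(i)$ the sets $\mc A_{I,J}$ are pairwise disjoint and open, hence clopen in $\mc A$. For $s\in[0,1]$ and $y\in\mc A_{I,J}$ put $\beta_s(y)=(1-s)\beta(y)+s\,\beta(h(y))\in[0,1)$ and $G_s(y,x)=(2\beta_s(y)-1,(y,0),(\varphi^{\mr y}_1(x_1),\ldots,\varphi^{\mr y}_n(x_n)))$, and let $G_s(y,x)$ be the basepoint when $y\notin\mc A$. Then $G_0=\Psi$, and since $\lambda_{I,J}(h(y),x)=(2\beta(h(y))-1,(h^{-1}(h(y)),0),\ldots)=(2\beta(h(y))-1,(y,0),\ldots)$ one gets $G_1=\zeta$; moreover each $G_s$ lands in $\Sigma\mc Q'_K$ for exactly the reason $\Psi$ does. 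Composing $G$ with the homotopy $\Phi'_K\simeq\Psi$ yields $\Phi'_K\simeq\zeta$.

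I expect the genuine work to be the joint continuity of $G$ at the boundaries $\partial\mc A_{I,J}$, and here the geometry in Lemma~\ref{LComult} is exactly what is needed. Since $\mc A_{I,J}$ is clopen in the open set $\mc A=\{y\in\Int(D^{n-1}):\nu^{\mr y}_{[n]}<\delta^{\mr y}\}$, its boundary is contained in $\bd D^{n-1}\cup\{y:\nu^{\mr y}_{[n]}=\delta^{\mr y}\}$; on the first part $\beta(y)\to1$, and on the second part the coordinate $\varphi^{\mr y}_i(x_i)$ for the index $i$ realising the maximum runs into $-1$, so the $(D^1)^{\wedge n}$‑factor tends to the basepoint. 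In either case $G_s(y,x)$ tends to the basepoint uniformly in $s$ as $y\to\partial\mc A_{I,J}$ (for $s>0$ also because $h$ is a homeomorphism onto $\Int(D^{n-1})$, forcing $\beta(h(y))\to1$), which together with the disjointness of the $\mc A_{I,J}$ lets the cellwise formulas patch into a continuous map on $\Sigma^{n-1}\hrzk\times[0,1]$. The remaining continuity points — for $\rho_s$, and for $\Psi$ and $G_s$ at the faces $\beta\in\{0,1\}$, at the basepoint of $\hrzk$, and across the strata inside $\mc A$ where $[n]_{(y,0)}$ changes — are routine, since the defining formulas are manifestly continuous into the ambient half‑smash $\Sigma(\mb R^n-\vartriangle_n)\wedge(D^1)^{\wedge n}$ and we have already checked that the images lie in $\Sigma\mc Q'_K$.
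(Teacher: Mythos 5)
Your proof is correct and follows the same basic route as the paper's, but it splits the argument into two stages where the paper uses one, and the second stage fills a small gap in the paper's argument as printed. The paper's single homotopy
$H_t(\omega)=(2\beta-1,\mr y,(\varphi_{1,t}(x_1),\ldots,\varphi_{n,t}(x_n)))$,
with $\varphi_{i,t}(x_i)=(1-t)x_i+t\varphi_i(x_i)$ and $\varphi_i$ computed from $y$, is exactly your $\Sigma\rho_s\circ\Phi'_K$, so your $\Psi$ coincides with the paper's $H_1$, and your verification that $\rho$ preserves $\mc Q'_K$ via $\frac{n}{3}$-neighbourliness is the same argument the paper gives for $H_t$ and $\lambda_{I,J}$. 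The paper then asserts $H_1=\zeta$ outright. But, as you correctly observe, $\lambda_{I,J}$ as defined computes the suspension parameter $\beta=\max\{|t_1|,\ldots,|t_{n-1}|,0\}$ from its own input, which after applying $\psi_K$ is $h(y)$ rather than $y$; so $H_1$ and $\zeta$ agree in the $(\mb R^n-\vartriangle_n)\wedge(D^1)^{\wedge n}$-factor but differ in the suspension scalar by $\beta(y)$ versus $\beta(h(y))$, and nothing in the construction of $h$ in Lemma~\ref{LComult} forces these to coincide. Your cellwise homotopy $G_s$ interpolating that scalar, with joint continuity at $\bd\mc A_{I,J}$ coming from the uniform degeneration to the basepoint of $\beta$ (near $\bd D^{n-1}$) and of the coordinates $\varphi^{\mr y}_i(x_i)$ (where $\nu^{\mr y}_{[n]}\to\delta^{\mr y}$), supplies exactly the missing step. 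A charitable reading is that the authors meant $\beta=\max\{|t'_1|,\ldots,|t'_{n-1}|,0\}$ in the definition of $\lambda_{I,J}$, i.e.\ computed from $y'=h^{-1}(\text{input})$, in which case $H_1=\zeta$ holds on the nose and your $G_s$ is unnecessary; but with the definition as printed, your extra homotopy is what makes the argument complete.
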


\begin{proof}

Define the homotopy $H_t\colon\seqm{\Sigma^{n-1}\hrzk}{}{\Sigma\mc Q'_K}$ for each 
$\omega=(t_1,\ldots,t_{n-1},(x_1,\ldots,x_n))$ in $\Sigma^{n-1}\hrzk$, and at each time $t\in[0,1]$, 
by mapping to the basepoint if $\beta=\max\{|t_1|,\ldots,|t_{n-1}|,0\}$ is $0$ or $1$, otherwise
$$
H_t(\omega)=(2\beta-1,(t_1,\ldots,t_{n-1},0),(\varphi_{1,t}(x_1),\ldots,\varphi_{n,t}(x_n))),
$$
where $\varphi_{i,t}(x_i)=(1-t)x_i+t\varphi_i(x_i)$ and $\varphi_i$ is as defined before for $y=(t_1,\ldots,t_{n-1})$ in place of $y'$.
We see that $H_t(\omega)\in \Sigma(\mb R^n-\vartriangle_n)\wedge (D^1)^{\wedge n}$ is in $\Sigma\mc Q'_K$ 
for similar reasons as outlined for $\lambda_{I,J}$.

Clearly $H_0=\Phi'_K$. 
By definition of $\mc A$, $\psi_K$, and $\varphi_i$, 
we have $\psi_K(\omega)=\ast$ and $\varphi_{i,1}(x_i)=\varphi_i(x_i)=-1$ (the basepoint in $D^1$) 
for some $i$ when $y\nin\mc A$, thus $\zeta(\omega)=H_1(\omega)=\ast$ in this case.
On the other hand $\mc A$ is the union of disjoint sets $\bigcup_{(I,J)\in\mc O}\mc A_{I,J}$, 
and if $y\in \mc A_{I,J}$ for some $(I,J)\in\mc O$, then by definition of $\psi_K$,
$\psi_K(\omega)=(h(y),(x_1,\ldots,x_n))$ in the summand $\Sigma^{n-1}\hrzk$ corresponding to $(I,J)$,
in which case 
\begin{align*}
\zeta(\omega)&=\lambda_{I,J}\circ\Sigma^{n-1}\hat\iota_{I,J}(\psi_K(\omega))\\ & = \lambda_{I,J}(h(y),(x_1,\ldots,x_n))\\
& = (2\beta-1,(h^{-1}(h(y)),0),(\varphi_1(x_1),\ldots,\varphi_n(x_n)))\\
& = (2\beta-1,(t_1,\ldots,t_{n-1},0),(\varphi_1(x_1),\ldots,\varphi_n(x_n)))\\
& = H_1(\omega).
\end{align*}
Thus, $H_1$ is the composite $\zeta$.

\end{proof}

\begin{proof}[Proof of Theorem~\ref{TMain}] 
Necessity follows from Proposition~\ref{PNecessary}.
The theorem is trivial to check when $n=1$ since $\zk=D^2$ here, so assume $n\geq 2$.
Since $K$ is $\frac{n}{3}$-neighbourly, then so is $K_I$, or $\frac{|I|}{3}$-neighbourly in particular.
Since the maps $\Phi'_K$ and $\hat\iota_{I,J}\colon\seqm{\hrzk}{}{\hrz{K_I}\wedge\hrz{K_J}}$ are homeomorphic to 
$\Phi_K$ and $\Sigma\iota_{I,J}\colon\seqm{\Sigma|K|}{}{\Sigma |K_I\ast K_J|}$ respectively,
sufficiency follows from Proposition~\ref{PCoherence} and applying Lemma~\ref{LHomotopy} for each $K_I$, 
$|I|\geq 2$, in place of $K$.
\end{proof}

\bibliographystyle{amsplain}
\bibliography{mybibliography}

\end{document}